\documentclass[12pt,a4paper,reqno]{amsart}
\allowdisplaybreaks
\usepackage{amsmath}
\usepackage{amsfonts}
\usepackage{amssymb,amsthm,amsfonts,amsthm,latexsym,enumerate,url,cases}
\usepackage{booktabs,array,longtable}
\numberwithin{equation}{section}
\usepackage{mathrsfs}
\usepackage{hyperref}
\hypersetup{colorlinks=true,citecolor=blue,linkcolor=blue,urlcolor=blue}
\newcommand{\N}{\mathbb N}
\newcommand{\Z}{\mathbb Z}

\newcommand{\ind}{\mathbf{1}}

\theoremstyle{plain}
\newtheorem{theorem}{Theorem}[section]
\newtheorem{lemma}[theorem]{Lemma}

\theoremstyle{definition}

\usepackage{etoolbox}
\makeatletter
\patchcmd{\@settitle}{\uppercasenonmath\@title}{}{}{}
\patchcmd{\@setauthors}{\MakeUppercase}{}{}{}
\patchcmd{\section}{\scshape}{}{}{}
\makeatother

\begin{document}

\title[{On additive complements in natural numbers}]{On additive complements in natural numbers}

\author{Quan-Hui Yang}
\address[Quan-Hui Yang]{Ministry of Education Key Laboratory for NSLSCS \\ School of Mathematical Sciences \\ Nanjing Normal University \\ Nanjing 210023 \\ China}
\email{yangquanhui01@163.com}
\author{Lilu Zhao}
\address[Lilu Zhao]
{School of Mathematical Sciences, University of Science and Technology of China, Hefei, Anhui, 230026, P.R. China}
\email{zhaolilu@ustc.edu.cn}

\keywords{additive complements; counting function; asymptotics, convolution.}
\subjclass[2020]{Primary: 05B10, 11B13, 11B34.}

\begin{abstract}We prove that for additive complements $A$ and $B$, if
$$
 \limsup_{x\to+\infty}\frac{A(x)B(x)}{x}<\frac43,
$$
then
$$
 A(x)B(x)-x\rightarrow+\infty\qquad(x\to+\infty).
$$
This improves the earlier upper bound $3-\sqrt{3}$ due to Fang and Chen.
\end{abstract}

 \maketitle

\section{Introduction}

Let $A,B\subseteq\N:=\{0,1,2,\cdots\}$ be infinite sets. We define the sumset 
$$A+B=\{a+b:\ a\in A,\, b\in B\}.$$ The sets $A$ and $B$ are called \emph{additive complements} if $A+B$ contains
all sufficiently large integers.
 Write
\[
 A(x)=\#\{a\in A:a\leqslant x\},\qquad
 B(x)=\#\{b\in B:b\leqslant x\}.
\]

A problem of Hanani and Erd\H{o}s (see \cite{Erdos1957}), asked whether
$$\limsup_{x\to+\infty}\frac{A(x)B(x)}{x}>1$$ 
must hold for additive complements $A$ and $B$. Danzer \cite{Danzer1964} disproved this in 1964 by constructing complements satisfying
\begin{equation*}
 \frac{A(x)B(x)}{x}\rightarrow 1\quad (x\to \infty).
\end{equation*}
Danzer further conjectured that for additive complements $A$ and $B$, if
\begin{align*}
 \limsup_{x\to+\infty}\frac{A(x)B(x)}{x}\leqslant 1,
\end{align*}
then
$$
 A(x)B(x)-x\rightarrow+\infty\qquad(x\to+\infty).
$$
This conjecture was proved by S\'ark\"ozy and Szemer\'edi \cite{SarkozySzemeredi1994}. The above condition was relaxed to 
\begin{align*}
 \limsup_{x\to+\infty}\frac{A(x)B(x)}{x}<\frac{5}{4}
\end{align*}
by Fang and Chen \cite{FangChen2010}. The constant $\frac{5}{4}$ in the preceding inequality was further improved to $3-\sqrt{3}$ in \cite{FangChen2014}. The purpose of this paper is to prove the following.

\begin{theorem}\label{thm:main}
For additive complements $A$ and $B$, if
\begin{align}\label{condition}
 \limsup_{x\to+\infty}\frac{A(x)B(x)}{x}<\frac43,
\end{align}
then
\begin{align}\label{toinfty}
 A(x)B(x)-x\rightarrow+\infty\qquad(x\to+\infty).
\end{align}
\end{theorem}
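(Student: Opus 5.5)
We argue by contradiction. Suppose $A$ and $B$ are additive complements satisfying \eqref{condition} but $A(x)B(x)-x\not\to+\infty$. Then there are a constant $C$ and a sequence $x_k\to\infty$ with $A(x_k)B(x_k)\leqslant x_k+C$. Since $A+B\supseteq [n_0,\infty)$, every integer $n\in[n_0,x]$ is represented as $a+b$ with $a,b\leqslant x$. Hence the number of pairs $(a,b)\in A\times B$ with $a,b\leqslant x$ is at least $x-n_0+1$. For $x=x_k$ this count exceeds the number of represented integers only by $O(1)$. Write $r(n)=\#\{(a,b):a+b=n\}$. The pairs with $a+b\leqslant x_k$ must then represent almost every $n\leqslant x_k$ exactly once. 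Moreover, the pairs $(a,b)$ with $a,b\leqslant x_k$ but $a+b>x_k$, of which there are roughly $A(x_k)B(x_k)-\#\{a+b\leqslant x_k\}$, number only $O(1)$. This is the standard starting point, going back to S\'ark\"ozy--Szemer\'edi and Fang--Chen.

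\textbf{Step 1.} Exploit the $O(1)$ bound on pairs with $a+b>x_k$. Let $a^*=\max A\cap[0,x_k]$ and $b^*=\max B\cap[0,x_k]$. All pairs $(a,b^*)$ with $a>x_k-b^*$ lie above $x_k$, so $A\cap(x_k-b^*,x_k]$ has $O(1)$ elements; symmetrically for $B$. Iterating this, the sets $A(x_k)$ and $B(x_k)$ essentially coincide with $A(y)$ and $B(z)$ for thresholds $y,z$ with $y+z\approx x_k$. This gives a near-tiling: $A_y+B_z$ covers $[0,x_k]$ with multiplicity one up to $O(1)$ exceptions, where $A_y=A\cap[0,y]$ and $B_z=B\cap[0,z]$.

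\textbf{Step 2.} Use structure of near-tilings. An exact factorization of an interval by $A_y+B_z$ has de Bruijn/Krasner-type structure: one set is a union of arithmetic progressions with a common difference $d$, and the other is a set of representatives built from a base-type expansion. I would prove a stable version of this, namely that the $O(1)$ defect can only perturb the structure at boundedly many places, via generating functions $A_y(t)B_z(t)=\frac{1-t^{x_k+1}}{1-t}+E(t)$ with $E$ having $O(1)$ terms.

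\textbf{Step 3.} Evaluate the counting functions at an intermediate point. In the tiling structure, say $B_z$ is an interval-like block of length $m$ at scale $d$ and $A_y$ consists of multiples-of-$m$ translates. Choose an intermediate $x'\in(x_k/2,x_k)$, or more precisely at a scale just after a jump of $A$ or $B$, where $A(x')B(x')/x'$ is large. For the two-level decomposition one gets products like $(q+1)\cdot(r\cdot\cdots)$ over a length that yields a ratio at least $\tfrac43-o(1)$. The extremal case is a tiling where $A$ jumps from $2$ to $3$ blocks, giving ratio $\frac{2\cdot 2}{3}=\frac43$. Because this happens for infinitely many $k$, it contradicts \eqref{condition}.

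\textbf{Main obstacle.} I expect the main obstacle to be Step 3: the optimization over all possible tiling structures, showing that the $\limsup$ ratio is always at least $\frac43$ rather than the weaker $3-\sqrt3$. The method should be to analyze the last two "digits" of the factorization and reduce to a finite optimization of $\max_{x'}\frac{A(x')B(x')}{x'}$ over parameters. Handling the $O(1)$ defects uniformly, and the case where the structure at scale $x_k$ is not yet stabilized, will require care. If Step 2 is too hard in full generality, the fallback is to use only the local consequence of Step 1: just below $x_k$, one of the sets has a gap of length comparable to the other's maximum. Then iterate over the dyadic-type scales $x_k/2$ and $x_k/3$ to extract the $\frac43$ bound directly.
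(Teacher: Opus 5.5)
\textbf{There is a genuine gap.} Everything that actually produces the constant $\frac43$ is deferred to Steps 2 and 3, and neither is carried out.

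Step 2 calls for a stability version of the de Bruijn/Krasner--Ranulac factorization theorem for near-tilings of $[0,x_k]$ with $O(1)$ defect. This is not a standard result, you give no argument for it, and even its correct formulation is delicate. Eventually-exact complements need not be of mixed-radix type at the bottom. For example, take $A=\{0,3\}\oplus 6A'$ and $B=\{0,2,4\}\oplus 6B'$ with $A'\oplus B'=\N$. Then $r(n)=1$ for every $n\neq 1$, because $\{0,3\}+\{0,2,4\}$ is a complete residue system mod $6$ but not an interval. Any factorization of $\Z/M\Z$ can serve as such a bottom layer in the same way.

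Step 3, the optimization over all such structures, you yourself flag as open. The fallback (``iterate over $x_k/2$ and $x_k/3$'') names no mechanism that yields $\frac43$.

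Step 1 is also not sound as written. Deleting a top element $a$ of $A_{x_k}$ destroys the $B(x_k-a)$ representations $a+b\leqslant x_k$, and this number need not be bounded. The step is unnecessary anyway, since $A_{x_k}+B_{x_k}$ itself already covers $[0,x_k]$ with $O(1)$ defect.

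\textbf{What the paper does instead.} It uses no structure theory. Your fallback has the right first move. If both $A$ and $B$ met $(x_j/2,x_j]$ for infinitely many $j$, the boundedness of $T(x_j)$ would give $A(x_j/2)B(x_j/2)=(1+o(1))x_j$, contradicting $\kappa<\frac43$. So, say, $A\cap(x_j/2,x_j]=\varnothing$, and then $x=\max(A\cap[0,x_j])$ satisfies $x\in A$ and $A\cap(x,2x]=\varnothing$.

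The missing idea is the mean-value estimate $\frac{1}{A(x)}\sum_{a\in A,\,a\leqslant x}a=\frac x2+O(A(x))$ at such a scale. It comes from the fact that differences are also almost uniquely represented: $d(t)\leqslant 1$ for $|t|\geqslant n_0$, a consequence of $r(n)=1$ for $n\geqslant n_0$. This is fed into the convolution identity $f*D=(f-\widetilde f)*\ind_{\N}-\widetilde f*\delta$ on $[0,2x]$, where $f=\ind_{A\cap[0,x]}$, $\widetilde f(n)=f(x-n)$, $D=\ind_{\N}-\widetilde f*\ind_B$ and $\delta=r-\ind_{\N}$. The right side vanishes on $[x+n_0,2x]$. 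Together with $\sum_n D_-(n)\leqslant K$ and $f(x)=1$, this forces $\sum_{n\leqslant x}|D(n)|=O(A(x))$. Summing the identity over $[0,x]$ then gives $|A(x)x-2\sum_{a}a|=O(A(x)^2)$.

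One then writes $2x+m_0=\sum_{u\in A,\,u\leqslant x}B(2x-u)$ and bounds each term by $\kappa(2x-u)/A(x)$, using $A(2x-u)=A(x)$. The hypothesis is thus used at all points of $[x,2x]$ simultaneously rather than at one well-chosen $x'$. This gives $2x\leqslant\kappa(\frac32x+o(x))$, hence $\kappa\geqslant\frac43$. Without the mean-value estimate the same count yields only $\kappa\geqslant 1$.
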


Chen and Fang \cite{ChenFang2011} proved that there are additive complements $A$ and $B$ such that
\begin{align*}
 \limsup_{x\to+\infty}\frac{A(x)B(x)}{x}=\frac{3}{2},
\end{align*}
but there exist infinitely many positive integers $x$ such that $A(x)B(x)-x=1$. In fact, a more general result was obtained in \cite{ChenFang2011}. 

For related results in this topic, one may refer to \cite{ChenFang2019,ChenYang2012,Nathanson2011,Ruzsa2001,Ruzsa2017}. 

In Section 2, we prepare some results which are essentially known in \cite{FangChen2014}. In Section 3, we establish an asymptotic formula by the tools of convolution. This is the new ingredient. Finally, we complete the proof of the main theorem in Section 4.
\section{Preparations}
Throughout this paper we assume that the infinite sets $A$ and $B$ are additive complements and \eqref{condition} holds. We also assume that \eqref{toinfty} does not hold. 
Define
\[
 r(n)=\#\{(a,b)\in A\times B:a+b=n\}.
\]
Then $r(n)\geqslant 1$ for all $n\geqslant N_0$. Thus for integer $x\geqslant N_0$, 
\begin{align*}A(x)B(x)\geqslant x-N_0.\end{align*}
By the same arguments as in S\'ark\"ozy and Szemer\'edi \cite[p.238]{SarkozySzemeredi1994} (see also \cite[p.85]{FangChen2014}), there exists $n_0\geqslant N_0$ such that
\begin{equation}\label{eq:uniqueness}
 r(n)=1\qquad(n\geqslant n_0).
\end{equation}
In particular, there is a fixed integer $m_0$ such that
\begin{equation}\label{eq:representation-count}
 \sum_{n=0}^{x}r(n)=x+m_0\qquad(x\in\Z^{+},\ x\geqslant n_0).
\end{equation}
Then by the lower bound above, there exists $L_0$ and infinitely many positive integers $x_j\geqslant n_0$ such that
\begin{align}\label{ABx}A(x_j)B(x_j)-x_j=L_0.\end{align}
In fact, one can take
$$L_0=\liminf_{x\to+\infty}\big(A(x)B(x)-x\big).$$
Let
\[
 T(x)=\#\{(a,b)\in A\times B:a,b\leqslant x,\ a+b>x\}.
\]
Note that $A(x)B(x)=\sum_{n=0}^xr(x)+T(x)$. For integer $x\geqslant x_0$, by \eqref{eq:representation-count}, we have $A(x)B(x)=x+m_0+T(x)$. Thus by \eqref{ABx},
\begin{equation}\label{eq:tail-bound}
 T(x_j)=L_0-m_0.
\end{equation}

We also need the corresponding fact about differences. Put
\[
 d(t)=\#\{(a,b)\in A\times B:b-a=t\}\qquad(t\in\Z).
\]
\begin{lemma}One has \begin{equation*}
 K:=\sum_{t\in \Z}\max\big(d(t)-1,0\big)<+\infty.
\end{equation*}\end{lemma}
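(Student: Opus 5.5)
The plan is to use the uniqueness of representations \eqref{eq:uniqueness}: a coincidence among differences $b-a$ produces a coincidence among sums $a+b$. Such coincidences can only happen below $n_0$.

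\textbf{Step 1 (differences to sums).} Fix $t\in\Z$ with $d(t)\geqslant 2$. Take two distinct pairs $(a,b),(a',b')\in A\times B$ with $b-a=b'-a'=t$. Since $b=a+t$ and $b'=a'+t$, distinct pairs force $a\neq a'$. Rearranging $b-a=b'-a'$ gives
\[
 a+b'=a'+b=:n .
\]
The pairs $(a,b')$ and $(a',b)$ both lie in $A\times B$, have sum $n$, and are distinct because $a\neq a'$. Hence $r(n)\geqslant 2$, and \eqref{eq:uniqueness} gives $n<n_0$.

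\textbf{Step 2 (localization).} All elements of $A$ and $B$ are nonnegative. So $a,b'\leqslant a+b'<n_0$ and $a',b\leqslant a'+b<n_0$. In particular, whenever $d(t)\geqslant 2$, every pair $(a,b)$ with $b-a=t$ has a partner as in Step 1. Therefore every such pair satisfies $a,b<n_0$.

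\textbf{Step 3 (counting).} Each pair $(a,b)\in A\times B$ determines exactly one difference $t=b-a$. Using this,
\[
 K=\sum_{t}\max(d(t)-1,0)\leqslant\sum_{t:\,d(t)\geqslant 2}d(t)\leqslant \#\{(a,b)\in A\times B:\ a,b<n_0\}\leqslant n_0^2 .
\]
This gives $K<+\infty$.

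The only point needing care is Step 1: checking that the two pairs with sum $n$ are genuinely distinct, which rests on $a\neq a'$. Beyond that I expect no obstacle; the argument is a direct consequence of \eqref{eq:uniqueness}.
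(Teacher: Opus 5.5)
Your proof is correct and follows the paper's argument: two pairs with the same difference $b-a=b'-a'$ give the coincidence $a+b'=a'+b$, and the uniqueness $r(n)=1$ for $n\geqslant n_0$ forces this sum below $n_0$, so every such pair lies in $[0,n_0)^2$. Your direct count $K\leqslant n_0^2$ is slightly more complete than the paper's reduction to showing $d(t)\leqslant 1$ for $|t|\geqslant n_0$. It also shows that $d(t)$ is finite for the small values of $t$, which the paper's ``it suffices'' leaves implicit, and it checks that $(a,b')$ and $(a',b)$ are distinct.
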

\begin{proof}It suffices to prove that $d(t)\leqslant 1$ for $|t|\geqslant n_0$. Suppose otherwise that $d(t)\geqslant 2$ for some $|t|\geqslant n_0$. 
Then $t=b-a=b'-a'$ for two distinct pairs $(a,b)$ and $(a',b')$. Then $a+b'=a'+b$. Recall \eqref{eq:uniqueness}, one has $a+b'=a'+b<n_0$ thus
 $a,a',b,b'<n_0$. This is a contradiction to $|t|\geqslant n_0$. The proof of this lemma is complete.
\end{proof}
In view of \eqref{eq:uniqueness}, we also introduce
\begin{equation*}
 E:=\sum_{n=0}^{+\infty}|r(n)-1|=\sum_{n=0}^{x_0-1}|r(n)-1|.
\end{equation*}

\section{An asymptotic formula}

Recall the notations $n_0$, $K$ and $E$ introduced in Section 2. Note that $n_0,K,E$ are all $O(1)$ terms and $A(x)=o(x)$ as $x\to +\infty$. In this section, we establish the following asymptotic formula. 
\begin{lemma}\label{lemmaasymp}Suppose that  $x\in A$, $x>n_0$ and  $A\cap (x,2x]=\varnothing$. 
One has 
\begin{equation*}
 \left|\frac{1}{A(x)}\sum_{\substack{a\in A \\ a\leqslant x}}a-\frac{x}{2}\right|
 \leqslant \frac{A(x)K+n_0+K+E}{2}.
\end{equation*}
\end{lemma}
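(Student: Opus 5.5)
The plan is to compute one quantity in two ways: once with the sum-representation function $r$, and once with the difference-representation function $d$. The gap hypothesis $A\cap(x,2x]=\varnothing$ makes the sum count sharp, and comparing the two counts isolates $\sum_{a\leqslant x}a$. I have not checked that the two counts combine as required, so the details below are tentative.

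The first step is the sums. Every $n\leqslant 2x$ has all of its representations $n=a+b$ with $a\leqslant x$, because $a\leqslant n\leqslant 2x$ and $A$ has no element in $(x,2x]$. Summing $r(n)$ over any window $n\in(y,y+x]\subseteq[0,2x]$ and using \eqref{eq:uniqueness} together with the definition of $E$ gives
\[
\sum_{\substack{a\in A\\ a\leqslant x}}\Big(B(y+x-a)-B(y-a)\Big)=x+O(E),
\]
where $O(E)$ denotes an error of absolute value at most $E$. I will use this with $y=0$, and with $y=a'$ for $a'\in A$, $a'\leqslant x$.

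The second step is the differences. The triple count
\[
S=\#\{(a,a',b)\in A\times A\times B:\ a,a'\leqslant x,\ a'<a+b\leqslant a'+x\}
\]
is, by the first step, equal to $A(x)x+O(A(x)E)$ when grouped by $a'$. Grouped instead by $a$, it counts pairs $(a',b)$ with $t=b-a'$ in a window of length $x$ that depends linearly on $a$. By the Lemma on $K$, each $t$ is counted at most once up to a total excess $K$. This gives at most $x+K$ pairs per $a$, and the deficit from $x$ should be governed by how far the window sticks out of the range $[-x,x]$ of attainable differences, which is linear in $a$.

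The third step is the lower bound on differences. Here I would use $x\in A$: the sums $x+b$ for $b\in B$, $b\leqslant x$, lie in $(x,2x]$ and are represented uniquely, so $b'-b\neq x-a$ for all other pairs. This should show that every $t$ in the relevant range $[-x,x]$ (away from $n_0$) is attained as $b-a'$ with $a'\leqslant x$ essentially exactly once. Granting this, grouping $S$ by $a$ gives $\sum_a(\text{linear function of }a)+O(A(x)K+n_0+K)$. Equating with the first grouping yields a linear identity for $\sum_{a\leqslant x}a$ with main term $A(x)x/2$. Dividing by $A(x)$ and collecting the errors $A(x)K$, $n_0$, $K$ and $E$ should give the stated bound, including the factor $\tfrac12$. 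The $E$ term must be handled through the $y=0$ count alone, so that it is not multiplied by $A(x)$.

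I expect the main obstacle to be this third step: proving that the differences $b-a'$ with $a'\leqslant x$ cover the relevant interval with multiplicity exactly one up to $O(K+n_0)$. This requires turning $x\in A$ and the empty gap $(x,2x]$ into a covering statement for $B-A$. The first thing I would try is to write each $t$ as $(x+t)-x$ and use the unique representation of $x+t$ as a sum.
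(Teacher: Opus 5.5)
\textbf{There is a genuine gap: the triple count $S$ carries no information about $\sum_{a\leqslant x}a$.} Grouped by $a$, $S$ asks for pairs $(a',b)$ with $b-a'\in(-a,x-a]$. For $0\leqslant a\leqslant x$ this window always lies inside $(-x,x]$, so nothing ever sticks out. If every $t\in[-x,x]$ is hit exactly once, as your third step asserts, each $a$ contributes exactly $x$. Both groupings then give $A(x)x$, and the identity is vacuous.

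Concretely, take $A=\{0,1,4,5,16,\dots\}$ and $B=\{0,2,8,10,\dots\}$ (even and odd binary digits) with $x=5$. Every $t\in[-5,5]$ equals $b-a'$ with $a'\leqslant 5$ exactly once, and $S=20$ either way, an identity that never involves $\sum a=10$. The defect is that both of your windows have length $x$, independent of the grouping variable.

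What works is a count whose window lengths vary with the grouping variable, for instance $S'=\#\{(a,a',b)\in A'\times A'\times B:\ a+b\leqslant a'\}$ with $A'=A\cap[0,x]$.
\begin{itemize}
\item Grouped by $a'$, it equals $\sum_{a'}\sum_{n\leqslant a'}r(n)=\sum_{a'}(a'+1)$, up to an error of at most $A(x)E$.
\item Grouped by $a$, it counts pairs with $b-a'\in[-x,-a]$. This is $x-a+1$ for each $a$, up to a total error of at most $A(x)\Delta$, where $\Delta=\sum_{-x\leqslant t\leqslant 0}|1-c(t)|$ and $c(t)=\#\{(a',b)\in A'\times B:\ b-a'=t\}$.
\end{itemize}
Equating the two gives $|A(x)x-2\sum a|\leqslant A(x)(\Delta+E)$, so the lemma reduces to $\Delta\leqslant A(x)K+n_0+K$. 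This is exactly the paper's convolution identity summed over $0\leqslant n\leqslant x$.

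Two consequences for your plan. Only $t\in[-x,0]$ is needed, not $[-x,x]$. Also, an error $A(x)E$ at this stage is precisely what the stated bound allows, so there is no need to confine $E$ to the $y=0$ count.

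\textbf{The coverage step also needs a different idea.} Writing $x+t=a+b$ only gives $t=b-(x-a)$, and $x-a$ need not lie in $A$. Uniqueness of sums such as $x+b$ yields only upper bounds on $c(t)$. Those the Lemma on $K$ already supplies: $\sum_t\max(c(t)-1,0)\leqslant K$, since $c(t)\leqslant d(t)$.

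The lower bound comes from a counting identity instead:
\begin{itemize}
\item For $s\in[n_0,x]$ and each $a'\in A'$, the integer $a'+s\in[n_0,2x]$ has exactly one representation $a+b$, and $a\in A'$ by the gap. Hence $\sum_{a\in A'}c(s-a)=A(x)$.
\item Isolating the term $a=x$ (this is where $x\in A$ enters) gives $\max(1-c(s-x),0)\leqslant\sum_{a\in A'}\max(c(s-a)-1,0)$. In words, a missing difference at $s-x$ forces a surplus at some shift $s-a$.
\item Summing over $s$ bounds the total deficit on $[n_0-x,0]$ by $A(x)K$.
\item The $n_0$ values $t<n_0-x$ contribute at most $n_0$, and the surplus at most $K$.
\end{itemize}
Together these give $\Delta\leqslant A(x)K+n_0+K$, which is the bound the reduction above requires.
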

\begin{proof}For any functions $f,g$ supported on $\N$, we consider the convolution
\[
 (f*g)(n)=\sum_{u=0}^{n}f(u)g(n-u).
\]
Now let $f=\ind_{A\cap[0,x]}$, $g=\ind_B$ and $h=\ind_{\N}$ be indicator functions. 
Write 
$$\delta(n)=r(n)-h(n).$$
 By
the definition of $E$, 
\[
 E=\sum_{n= 0}^{+\infty}|\delta(n)|.
\]
Define $\widetilde{f}(n)=f(x-n)$, and set
\begin{align*}
 D=h-\widetilde{f}*g.
\end{align*}
For $n\geqslant 0$,
\begin{align*}
 (\widetilde{f}*g)(n)=\sum_{u=0}^n\widetilde{f}(u)g(n-u)=&\,\sum_{u=0}^nf(x-u)g(n-u)\notag
 \\=&\,\#\{(a,b)\in A'\times B:b-a=n-x\}\notag
 \\ \leqslant&\, d(n-x),
\end{align*}
where 
$$A'=A\cap[0,x].$$ 

Write $D_+=\max(D,0)$ and $D_{-}=\max(-D,0)$ pointwise. 
We obtain
from the definition of $K$ and the preceding estimate for $\widetilde{f}*g$,
\begin{equation}\label{eq:D-negative}
\sum_{n= 0}^{+\infty}D_-(n)\leqslant K.
\end{equation}

Since $A\cap(x,2x]=\varnothing$, we have $f*g=h+\delta$ on $[0,2x]$. We deduce that
\begin{align*}
 f*D=f*(h-\widetilde{f}*g)=f*h-f*\widetilde{f}*g=&\,f*h-\widetilde{f}*f*g
\\=&\,(f-\widetilde{f})*h+\widetilde{f}*(h-f*g),
\end{align*}
and thus on $[0,2x]$,
\begin{equation}\label{eq:convolution}
 f*D=
 (f-\widetilde{f})*h-\widetilde{f}*\delta.
\end{equation}
For $x+n_0\leqslant n\leqslant 2x$, both $(f*h)(n)$ and $(\widetilde{f}*h)(n)$ are equal to $A(x)$, and thus $(f-\widetilde{f})*h(n)=0$. Recall that $\delta$ is supported in $[0,n_0-1]$ and $\widetilde{f}$ is supported in $[0,x]$, we also have $(\widetilde{f}*\delta)(n)=0$ for $x+n_0\leqslant n\leqslant 2x$. Then we obtain from \eqref{eq:convolution} that
\begin{equation*}
 (f*D)(n)=0\qquad(x+n_0\leqslant n\leqslant 2x).
\end{equation*}
Since $f(x)=1$, we have $f*D_{+}(n)\geqslant D_{+}(n-x_0)$ and then by the preceding identity
\begin{align*}
 \sum_{n=n_0}^{x}D_+(n)
 \leqslant \sum_{n=x+n_0}^{2x}(f*D_+)(n)
 =\sum_{n=x+n_0}^{2x}(f*D_-)(n).
\end{align*}
We further deduce that
\begin{align*}
\sum_{n=x+n_0}^{2x}(f*D_-)(n)\leqslant \sum_{n=0}^{2x}(f*D_-)(n)= \sum_{u\leqslant 2x}D_{-}(u)\sum_{a\leqslant 2x-u}f(a)
  \leqslant A(x)\sum_{u\leqslant 2x}D_{-}(u).
\end{align*}
and by \eqref{eq:D-negative},
\begin{align*}
\sum_{n=x+n_0}^{2x}(f*D_-)(n) \leqslant A(x)K.
\end{align*}
Similarly, we also have
\begin{align*}\sum_{n=0}^{x}|(f*D)(n)|\leqslant A(x)\sum_{n=0}^{x}D(n) \quad \textrm{ and }\quad
 \sum_{n=0}^{x}|\widetilde{f}*\delta(n)|
  \leqslant A(x)E.
\end{align*}
We conclude from above that
\begin{align}\label{boundD+}
 \sum_{t=n_0}^{x}D_+(t)
  \leqslant A(x)K.
\end{align}

 By the definition of $D$ and $D_{+}$,
\begin{equation*}
 D_+(n)\leqslant 1.
\end{equation*}
Using this pointwise bound on $D_+$ for the remaining $n_0$ indices and noting that $D=D_{+}-D_{-}$,  we deduce from \eqref{eq:D-negative} and \eqref{boundD+} that
\begin{equation*}
 \sum_{n=0}^{x}|D(n)|\leqslant \sum_{n=0}^{x}D_{+}(n)+\sum_{n=0}^{x}D_{-}(n)\leqslant A(x)K+n_0+K.
\end{equation*}

Recall that $A'=A\cap[0,x]$, we have
\begin{align*}
 \sum_{n=0}^{x}\big((f-\widetilde{f})*h\big)(n)
 &=\sum_{u\in A'}(x-u+1)-\sum_{u\in A'}(u+1)\\
 &=A(x)x-2\sum_{u\in A'}u.
\end{align*}
and by \eqref{eq:convolution},
\begin{align*}
 \sum_{n=0}^{x}f*D+\sum_{n=0}^{x}\widetilde{f}*\delta
=A(x)x-2\sum_{u\in A'}u.
\end{align*}
Now the above bounds give
\begin{align*}
 \left|A(x)x-2\sum_{u\in A'}u\right|
 &\leqslant\sum_{n=0}^{x}|(f*D)(n)|
       +\sum_{n=0}^{x}|(\widetilde{f}*\delta)(n)|\\
 &\leqslant A(x)\big(A(x)K+n_0+K\big)+A(x)E.
\end{align*}
Dividing by $2A(x)$ yields the desired estimate. This completes the proof of the lemma.
\end{proof}

\section{Proof of Theorem \ref{thm:main}}

\noindent{\it Proof of Theorem \ref{thm:main}.} We proceed the proof by contradiction, that is we assume that \eqref{toinfty} does not hold. In view of \eqref{condition}, we can choose $0<\kappa<\frac{4}{3}$ such that 
\begin{align}\label{boundbykappa}A(x)B(x)\leqslant \kappa x \ (x\geqslant x_0).\end{align}
Then we have 
\begin{align*}A(x)=o(x)\quad  \textrm{and} \quad B(x)=o(x).\end{align*}

Consider $x_j$ satisfying \eqref{eq:tail-bound} and sufficiently large. We may assume that $x_j>x_0$ for all $j$. In particular both $A(x_j)$ and $B(x_j)$ are sufficiently large. Let 
\[
 u_j=A(x_j)-A(x_j/2),\qquad
 v_j=B(x_j)-B(x_j/2).
\]
For $a\in A$ with $x_j/2<a\leqslant x_j$ and $b\in B$ with $x_j/2<b\leqslant x_j$, one has $a+b>x_j$. So
\[
 u_jv_j\leqslant T(x_j).
\]
We claim that either $A\cap(x_j/2,x_j]=\varnothing$ for infinitely many $j$ or $B\cap(x_j/2,x_j]=\varnothing$ for infinitely many $j$.  Otherwise, both $u_j$ and $v_j$ are positive for infinitely many $j$. Then 
$$u_j,v_j\leqslant L_0-M_0.$$
Then by \eqref{ABx}, we deduce that
\begin{align*}
 A(x_j/2)B(x_j/2)
 &=(A(x_j)-u_j)(B(x_j)-v_j)\\
 &=A(x_j)B(x_j)-u_jB(x_j)-v_jA(x_j)+u_jv_j\\
 &=(1+o(1))x_j.
\end{align*}
This is a contradiction to \eqref{boundbykappa}. Therefore, the above claim holds. Interchanging $A$ and $B$ if
necessary, we may assume that for infinitely many $j$,
\begin{align}\label{nothing}
 A\cap(x_j/2,x_j]=\varnothing.
\end{align}
From now on, we focus on those $j$ satisfying \eqref{nothing}.
Let $a_j=\max(A\cap[0,x_j])$. Then $a_j\to+\infty$ as $j\to+\infty$. Moreover, $a_j\leqslant x_j/2$, and
\begin{equation*}
 A\cap(a_j,2a_j]=\varnothing.
\end{equation*}
 Applying Lemma \ref{lemmaasymp} with $x=a_j$, we have
\begin{equation*}
 \frac{1}{A(x)}\sum_{\substack{u\in A\\u\leqslant x}}u=\frac x2+O\big(\frac{x}{B(x)}\big)=\frac{x}{2}+o(x).
\end{equation*}

By \eqref{eq:representation-count},  
\begin{equation*}
 2x+m_0=\sum_{n=0}^{2x}r(n)=\sum_{\substack{u\in A\\u\leqslant x}}B(2x-u).
\end{equation*}
For every $u$ in this sum, $x\leqslant 2x-u\leqslant 2x$, and therefore
$A(2x-u)=A(x)$. By \eqref{boundbykappa},
\[
 B(2x-u)\leqslant \frac{\kappa(2x-u)}{A(x)}.
\]
Combining this with the preceding two identities gives
\begin{align*}
 2x+m_0
 \leqslant\frac{\kappa}{A(x)}
        \sum_{\substack{u\in A\\u\leqslant x}}(2x-u)
 =2\kappa x-\frac{\kappa}{A(x)}
        \sum_{\substack{u\in A\\u\leqslant x}}u=\kappa\left(\frac32x+o(x)\right).
\end{align*}
Dividing by $x$ and letting $x=a_j\to\infty$, we obtain
\[
 2\leqslant\frac32\kappa,
\]
which is a contradiction to $\kappa<4/3$. We complete the proof of Theorem \ref{thm:main}.

\section*{Acknowledgments}
This work is support by the National Key Research and Development Program of China (Grant No. 2021YFA1000701), National Natural Science Foundation of China  (Grant No. 12371005 and 12471088).


\begin{thebibliography}{99}


\bibitem{ChenFang2011}
Y.-G.~Chen and J.-H.~Fang,
\emph{On additive complements, II},
Proc. Amer. Math. Soc. \textbf{139} (2011), 881--883.

\bibitem{ChenFang2019}
Y.-G.~Chen and J.-H.~Fang,
\emph{Additive complements with Narkiewicz's condition},
Combinatorica \textbf{39} (2019), 813--823.

\bibitem{ChenYang2012}
Y.-G.~Chen and Q.-H.~Yang,
\emph{On a problem of Nathanson related to minimal additive complements},
SIAM J. Discrete Math. \textbf{26} (2012), 1532--1536.

\bibitem{Danzer1964}
L.~Danzer,
\emph{\"Uber eine Frage von G.~Hanani aus der additiven Zahlentheorie},
J. Reine Angew. Math. \textbf{214/215} (1964), 392--394.

\bibitem{Erdos1957}
P.~Erd\H{o}s,
\emph{Some unsolved problems},
Michigan Math. J. \textbf{4} (1957), 291--300.

\bibitem{Nathanson2011}
M.~B.~Nathanson,
\emph{Problems in additive number theory, IV: Nets in groups and shortest length $g$-adic representations},
Int. J. Number Theory \textbf{7} (2011), 1999--2017.


\bibitem{FangChen2010}
J.-H.~Fang and Y.-G.~Chen,
\emph{On additive complements},
Proc. Amer. Math. Soc. \textbf{138} (2010), 1923--1927.

\bibitem{FangChen2014}
J.-H.~Fang and Y.-G.~Chen,
\emph{On additive complements. III},
J. Number Theory \textbf{141} (2014), 83--91.


\bibitem{Ruzsa2001}
I.~Z.~Ruzsa,
\emph{Additive completion of lacunary sequences},
Combinatorica \textbf{21} (2001), 279--291.


\bibitem{Ruzsa2017}
I.~Z.~Ruzsa,
\emph{Exact additive complements},
Q. J. Math. \textbf{68} (2017), 227--235.

\bibitem{SarkozySzemeredi1994}
A.~S\'ark\"ozy and E.~Szemer\'edi,
\emph{On a problem in additive number theory},
Acta Math. Hungar. \textbf{64} (1994), 237--245.

\end{thebibliography}
\end{document}